\theoremstyle{plain}
\newtheorem{theorem}{Theorem}[section]
\newtheorem{lemma}[theorem]{Lemma}
\newtheorem{corollary}[theorem]{Corollary}
\newtheorem{proposition}[theorem]{Proposition}
\newtheorem*{corollary*}{Corollary}
\theoremstyle{remark}
\newtheorem*{remark}{Remark}
\DeclareMathOperator{\esssup}{ess\,sup}
\title{$\omega$-recurrence in skew products}
\author{Jon Chaika}
\address{Department of Mathematics\\5734 S. University Avenue\\Chicago IL, 60637}
\email{jonchaika@math.uchicago.edu}
\author{David Ralston}
\address{Depatment of Mathematics/CIS \\ SUNY College at Old Westbury \\ PO 210 \\ Old Westbury, NY 11568}
\email{ralstond@oldwestbury.edu}
\thanks{The first author is supported by NSF grant \#1004372.  Both authors are indebted to the referee of an earlier version for helpful comments on improving clarity and presentation.}
\date{\today}
\begin{document}
\begin{abstract}
The rate of recurrence to measurable subsets in a conservative, ergodic infinite-measure preserving system is quantified by generic divergence or convergence of certain sums given by a function $\omega(n)$.  In the context of skew products over transformations of a probability space, we relate this notion to the more frequently studied question of the growth rate of ergodic sums (including Lyapunov exponents).  We study in particular skew products over an irrational rotation given by bounded variation $\mathbb{Z}$-valued functions: first the generic situation is studied and recurrence quantified, and then certain specific skew products over rotations are shown to violate this generic rate of recurrence.
\end{abstract}
\maketitle

\section{Introduction}

In the study of transformations $T:X \rightarrow X$ which are ergodic with respect to a probability measure $\mu$, the rate of recurrence to a set of positive measure is completely governed by the Birkhoff ergodic theorem: if $A \subset X$ is of measure $\mu(A)=\alpha >0$, then for almost every $x \in X$ we have that the sequence $\{n : T^n(x) \in A\}$ is of \textit{density} $\alpha$ (we write $\#A$ for the number of elements in a set $A$):
\begin{equation}\label{eqn - density} \lim_{i \rightarrow \infty} \frac{\# \{n=0,1,\ldots,i-1 : T^n(x) \in A\} }{i} = \alpha.\end{equation}

If, however, $\mu(X)=\infty$ and $T$ is conservative and ergodic, then for any set $A$ of measure $0 < \mu(A) < \infty$, for almost every $x$ the sequence $\{n : T^n(x) \in A\}$ is infinite but the limit in \eqref{eqn - density} is zero.  To quantify a generic rate of recurrence in this setting, we follow \cite{krengel} and let $\omega: \mathbb{R}^+ \rightarrow \mathbb{R}^+$ be nonincreasing and regularly varying ($\omega(kx) \asymp \omega(x)$ for all $k \in \mathbb{R}^+$). For each $x \in X$ we define
\[\zeta_j^k(x) = \sum_{i=j}^{k-1} \chi_A(T^i x) \omega(i), \quad \zeta(x) = \lim_{n \rightarrow \infty} \zeta_0^n (x)\]
and through the assumption that $T$ is ergodic and conservative we have that $\zeta(x)$ either converges almost everywhere or diverges almost everywhere, and furthermore this convergence or divergence does not depend on the choice of the set $A$ of positive, finite measure.  If $\zeta(x)=\infty$ for almost every $x$, the system is said to be $\omega$-recurrent, while if $\zeta(x)<\infty$ for almost every $x$ the system is called $\omega$-nonrecurrent.

In this work we will concern ourselves with the following situation: let $\{Y, \nu\}$ be a probability space and $S: Y \rightarrow Y$ an ergodic measure-preserving transformation.  Let $f:Y \rightarrow G$ be a function into a countable discrete group $G$, and denote the identity element of $G$ by $e$.  Let $X = Y \times G$ and $\mu$ be the product of $\nu$ and the counting measure on $G$.  Define the \textit{skew transformation}
\begin{equation}\label{eqn - skew}T(y,g) = (S(y), g\cdot f(y)),\end{equation}
and assume that $\{X, \mu, T\}$ is ergodic.  Then the sequence of \emph{ergodic products of $y$} is given by
\[e, f(y), f(y)\cdot f(S(y)), f(y) \cdot f(S(y)) \cdot f(S^2(y)), \ldots, \prod_{i=0}^{n-1} f(S^i (y)), \ldots\]
We then have the cocycle identity for $n,m \geq 0$
\begin{equation}\label{eqn - cocycle}\prod_{i=0}^{n+m-1}f(S^i y)= \prod_{i=0}^{n-1}f(S^i y) \cdot \prod_{i=0}^{m-1}f(S^{n+i}y).\end{equation}

In \S\ref{section - outline process} we will introduce notation and the central result of this work (Theorem \ref{theorem - main abstract theorem}).  This machinery will be applied in \S\ref{section - rotation cocycles} to skew products over irrational rotations; let $Y=\mathbb{R}/\mathbb{Z}$ and $S$ be rotation by $\alpha \notin \mathbb{Q}$, and let $f:Y \rightarrow \mathbb{Z}$ be of bounded variation.  In the particular case that
\begin{equation}\label{eqn - staircase function}f(y) = \chi_{[0,1/2)}(y) - \chi_{[1/2,1)}(y),\end{equation}
this skew product is called the \textit{infinite staircase}.

We will study in \S\ref{section - rotation cocycles} the question of \textit{generic} $\omega$-recurrence in skew products into $\mathbb{Z}$ over irrational rotations: for almost-every choice of $\alpha$, any such ergodic skew product with fixed $f$ of bounded variation is $1 / n$-recurrent (Theorem \ref{theorem - 1 over n}).  We will also apply our results to a particular class of \emph{interval exchange transformations} as another example of how the results of \S\ref{section - outline process} may be applied in general.  Furthermore, using techniques developed in \cite{substitutions1}, we will show in \S\ref{section - staircase} that for any $\omega(n) \in o(n^{\epsilon})$ with $\epsilon < -1/2$ there is an uncountable set of $\alpha$ for which the infinite staircase is \emph{not} $\omega$-recurrent (Theorem \ref{theorem - not omega recurrent for power more than half}).  Finally, the proof of a technical lemma in the theory of continued fractions (Lemma \ref{lemma - technical lemma}) is given in \S\ref{section - lemmas}.

\section{$\omega$-recurrence, ergodic sums and Lyapunov exponents}\label{section - outline process}

Let $\{Y, \nu\}$ be a probability space and $S:Y \rightarrow Y$ be an ergodic measure-preserving transformation.  Let $f:Y \rightarrow G$ be a function into a countable discrete group $G$ with identity element $e$.  Let $\{N_k\}$ be an increasing sequence of positive integers for $k=1,2,\ldots$, and for each $y \in Y$ let
\[r_k(y) = \#\left\{g \in G : g=\prod_{i=0}^{n-1}f(S^i y), \quad n =1,2,\ldots,N_k \right\}\]
Fixing some $\epsilon_1 \in (0,1]$, let $\rho_k(\epsilon_1)=\rho_k$ be such that
\[\nu \left\{ y \in Y : r_k(y) \leq \rho_k \right\} \geq \epsilon_1,\] and then denote
\begin{equation}\label{eqn - sets A} A_k = \left\{ y \in Y : r_k(y) \leq \rho_k\right\}.\end{equation}
Fixing some choice of $k$, let $y \in A_k$ be arbitrary (but fixed), and let $\epsilon_2 \in (0,1)$.  Then a particular $g \in G$ will be called $\epsilon_2$-\textit{crowded} if
\[ \sum_{n=1}^{N_k} \chi_g \left( \prod_{i=0}^{n-1}f(S^i y) \right) \geq \epsilon_2 \frac{N_k}{\rho_k}.\]
Define $G'_k(y, \epsilon_1, \epsilon_2)=G'_k(y)$ to be the set of all $\epsilon_2$-crowded values of $y$.  Then the following is immediate:

\begin{lemma}\label{lemma - lots of balls in crowded bins}
For each $k$ and any $y \in A_k$,
\[ \# \left\{n = 1,2, \ldots, N_k : \prod_{i=0}^{n-1}f(S^i(y)) \in G'_k(y) \right\} \geq (1- \epsilon_2)N_k.\]
\end{lemma}

Let $\epsilon_3 \in (0,1)$.  With $k$ and $y \in A_k$ fixed, $m \in \{1,\ldots, N_k\}$ will be called $\epsilon_3$-\textit{predicting} if
\[ \# \left\{n = 1,2,\ldots,N_k : \prod_{i=0}^{m+n}f(S^i y) = \prod_{i=0}^m f(S^i y)\right\} \geq \epsilon_2 \epsilon_3 \frac{N_k}{\rho_k}.\]
Define $N'_k(y, \epsilon_1,\epsilon_2,\epsilon_3)=N'_k(y)$ to be the set of $\epsilon_3$-predicting times.  

\begin{lemma}\label{lemma - how many predicting}
For each $k$ and any $y \in A_k$, $\#N'_k(y) \geq (1-\epsilon_2) (1-\epsilon_3) N_k - \rho_k$.
\begin{proof}
Fix $y$ and enumerate $G'_k(y)=\{g_1, g_2, \ldots, g_m\}$, where $m \leq \rho_k$.  Let $j_i$ be the number of ergodic products with value $g_i$, so that by Lemma \ref{lemma - lots of balls in crowded bins} we have $j_1 + \ldots +j_m \geq (1-\epsilon_2)N_k$.  For each $i$, then, at least $[(1-\epsilon_3)j_i] = (1-\epsilon_3)j_i - \{(1-\epsilon_3)j_i\}$ times are $\epsilon_3$-predicting, where $[x]$ denotes the integer part of $x$ and $\{x\}=x-[x]$.  Then the number of $\epsilon_3$-predicting times is at least
\[\sum_{i=1}^{m} [(1-\epsilon_3)j_i] \geq (1-\epsilon_3)(1-\epsilon_2)N_k - \sum_{i=1}^m \{(1-\epsilon_3)j_i\} > (1-\epsilon_3)(1-\epsilon_2)N_k-\rho_k. \qedhere\]
\end{proof}
\end{lemma}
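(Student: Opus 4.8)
The plan is to work bin-by-bin: inside each $\epsilon_2$-crowded bin almost all of the balls it contains are automatically $\epsilon_3$-predicting, for the simple reason that each such ball is followed in time by a controlled number of other balls realizing the same element of $G$. First I would fix $y \in A_k$, enumerate the $\epsilon_2$-crowded bins as $g_1,\dots,g_m$ with $m \le \rho_k$, and set $j_i$ equal to the number of balls lying in $g_i$; by Lemma~\ref{lemma - lots of balls in crowded bins} these are precisely the bins carrying the guaranteed $(1-\epsilon_2)N_k$ balls, so $\sum_{i=1}^m j_i \ge (1-\epsilon_2)N_k$.

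Next I would fix one crowded bin $g_i$ and list the times at which its balls occur as $m_1 < m_2 < \dots < m_{j_i}$, all in $\{1,\dots,N_k\}$. For the $s$-th of these times, every later time $m_t$ (with $t > s$) has the form $m_s + n$ for some $n$ with $1 \le n \le N_k$ and realizes the same ergodic sum as $m_s$; hence the number of admissible $n$ in the definition of ``$\epsilon_3$-predicting'' is at least $j_i - s$. Thus $m_s$ is $\epsilon_3$-predicting as soon as $j_i - s \ge \epsilon_2\epsilon_3 N_k/\rho_k$, and since crowdedness gives $j_i \ge \epsilon_2 N_k/\rho_k$ this is implied by $s \le (1-\epsilon_3)j_i$. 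So at least $[(1-\epsilon_3)j_i]$ of the balls in $g_i$ are $\epsilon_3$-predicting.

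Finally I would sum over $i$ and discard the fractional parts, using $[(1-\epsilon_3)j_i] = (1-\epsilon_3)j_i - \{(1-\epsilon_3)j_i\}$ with $0 \le \{(1-\epsilon_3)j_i\} < 1$ together with $m \le \rho_k$, to obtain at least $(1-\epsilon_3)\sum_i j_i - \rho_k \ge (1-\epsilon_3)(1-\epsilon_2)N_k - \rho_k$ predicting balls, which is the claim.

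The genuinely delicate point --- the step I expect to take the most care --- is matching the index conventions: one has to verify that a later ball of the same bin really does contribute a value of $n$ in the prescribed range $\{1,\dots,N_k\}$, and that the shift between ``ball $m$'' and the ergodic sum $\sum_{i=0}^{m} f(S^i y)$ appearing in the definition costs us nothing. Everything after that --- the implication $s \le (1-\epsilon_3)j_i \Rightarrow j_i - s \ge \epsilon_2\epsilon_3 N_k/\rho_k$ and the summation over bins --- is routine arithmetic.
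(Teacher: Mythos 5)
Your proposal is correct and follows essentially the same route as the paper: bound the count bin-by-bin in the $\epsilon_2$-crowded bins, get $[(1-\epsilon_3)j_i]$ predicting balls per bin, and lose at most $m\leq\rho_k$ from the fractional parts. In fact you supply the per-bin step (ordering the balls in time and checking that the $s$-th ball is predicting once $s\leq(1-\epsilon_3)j_i$, via crowdedness $j_i\geq\epsilon_2 N_k/\rho_k$) that the paper asserts without detail, so nothing is missing.
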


Note that from \eqref{eqn - cocycle} it follows that if $y \in Y$, $\tau$, $m \in \mathbb{N}$,
\[\left(\prod_{i=0}^{\tau-1}f(S^i y) = \prod_{i=0}^{\tau +m -1}f(S^i y) \right) \quad \Longrightarrow \left( \prod_{i=0}^{m-1}f\left(S^i (S^\tau y)\right) = e\right).\]

Finally, define $B_k(\epsilon_1, \epsilon_2, \epsilon_3)=B_k$ by
\begin{equation}\label{eqn - define B_k}B_k = \left\{ y \in Y : \sum_{m=1}^{N_k} \chi_{e}\left(\prod_{i=0}^{m-1}f(S^i y)\right) \geq \epsilon_2 \epsilon_3 (N_k/\rho_k)\right\},\end{equation} the set of points which have at least $\epsilon_2 \epsilon_3 N_k/\rho_k$ ergodic products equal to the identity in the first $N_k$ times.

\begin{proposition}\label{proposition - the good B_k}
If $\rho_k \in o(N_k)$, then
\[\liminf_{k \rightarrow \infty}\nu(B_k) \geq \epsilon_1(1 - \epsilon_2)(1-\epsilon_3) > 0.\]
\begin{proof}
Recall the sets $A_k$ from \eqref{eqn - sets A}, and let $C_i = S^{i}(A_k)$, $\tilde{C}_i = \left( C_i \cap B_k \right)$ for $i=0,1,\ldots,N_k-1$.  Combining lemma \ref{lemma - how many predicting}, $\nu(C_0) = \nu(A_k) \geq \epsilon_1$ and the fact that $S$ preserves $\nu$, we obtain
\[\sum_{i=0}^{N_k-1}\nu(\tilde{C}_i) \geq \epsilon_1\left((1-\epsilon_2)(1-\epsilon_3)N_k-\rho_k\right),\]
so the measure of the union 
\[ \bigcup_{i=0}^{N_k-1} \tilde{C}_i\] is at least this quantity divided by the number of sets, $N_k$.  Finally, we trivially note that this union of the $\tilde{C}_i$ is contained within $B_k$, and the conclusion follows from the assumption that $\rho_k \in o (N_k)$.
\end{proof}
\end{proposition}

\begin{lemma}\label{lemma - new lemma}
Suppose that $i_1< i_2 < \ldots < i_m$ and let
\[ y \in \bigcap_{j=1}^m B_{i_j},\] with $B_k$ defined as in \eqref{eqn - define B_k}.  Assume that there is some $\delta>0$ so that for $j=1,2,\ldots,m$ we have
\[ \frac{N_{i_j}}{\rho_{i_j}} \geq \delta \sum_{\ell =1}^{j-1} \frac{N_{i_{\ell}}}{\rho_{i_{\ell}}}.\]
Then if we set $\delta' = \delta/(1+\delta)$,
\begin{equation}\label{eqn - estimate for main theorem} \zeta_1^{N_{i_m}}(y) \geq \delta' \epsilon_2 \epsilon_3 \sum_{j=1}^{m} \omega(N_{i_j}) \frac{N_{i_j}}{\rho_{i_j}} .\end{equation}
\begin{proof}
The quantity $\delta'$ satisfies for all $j=1,2,\ldots,m$
\begin{equation}\label{eqn - alternate delta} \frac{N_{i_j}}{\rho_{i_j}} - \delta' \sum_{\ell=1}^j \frac{N_{i_{\ell}}}{\rho_{i_{\ell}}} \geq \delta' \frac{N_{i_j}}{\rho_{i_j}}.\end{equation}
For any $y \in B'$, first note that as $y \in B_{i_1}$, there are at least $\epsilon_2 \epsilon_3 (N_{i_1}/\rho_{i_1})$ ergodic products equal to the identity before time $N_{i_1}$, so using the fact that $\omega$ is non-increasing and $\delta'<1$,
\[\zeta_1^{N_{i_1}}(y) \geq \delta' \epsilon_2 \epsilon_3 \omega(N_{i_1}) \frac{N_{i_1}}{\rho_{i_1}}.\]
Next, as $y \in B_{i_2}$, there are at least $\epsilon_2 \epsilon_3 (N_{i_2}/\rho_{i_2})$ such times before $N_{i_2}$.  At least $\epsilon_2 \epsilon_3 (N_{i_1}/\rho_{i_1})$ of these times occur before $N_{i_1}$ - perhaps even all of them do.  Again using that $\omega$ is non-increasing, however, we may elect to only replace $\delta' \epsilon_2 \epsilon_3 (N_{i_1}/\rho_{i_1})$ of these times with the smaller value $\omega(N_{i_1})$, and then replace the rest with the even smaller $\omega(N_{i_2})$:
\[ \zeta_1^{N_{i_2}}(y) \geq \epsilon_2 \epsilon_3 \left( \omega(N_{i_1}) \frac{\delta' N_{i_1}}{\rho_{i_2}} + \omega(N_{i_2}) \left( \frac{N_{i_2}}{\rho_{i_2}} - \frac{\delta' N_{i_1}}{\rho_{i_1}}\right) \right) \geq \delta' \epsilon_2 \epsilon_3 \sum_{j=1}^2 \omega(N_{i_j}) \frac{N_{i_j}}{\rho_{i_j}},\]
where the last inequality follows from \eqref{eqn - alternate delta}.  Proceeding in this manner we obtain \eqref{eqn - estimate for main theorem}.
\end{proof}
\end{lemma}

\begin{theorem}\label{theorem - main abstract theorem}
Suppose that $\rho_k \in o(N_k)$, and that there is a $\delta>0$ such that the inequality
\begin{equation} \label{eqn - strange equation at the heart of it all}
\frac{N_k}{\rho_k} \geq \delta \sum_{i=1}^{k-1} \frac{N_i}{\rho_i} \end{equation}
holds for sufficiently large $k$, and assume that $\{X, \mu, T\}$ is conservative and ergodic.  Then the system is $\omega$-recurrent for all $\omega$ such that
\[\sum_{k=1}^{\infty} \omega(N_k) \frac{N_k}{\rho_k} = \infty.\]
\begin{proof}
Assume without loss of generality that \eqref{eqn - strange equation at the heart of it all} holds for all $k$, and again set $\delta' = \delta/(1+\delta)$.  Recall the sets $B_k$ defined in \eqref{eqn - define B_k}, and suppose that $i_1<i_2< \ldots i_m$ and $j_1<j_2< \ldots j_M$ with $j_M \geq i_m$, so that that we may find
\[ B \subset \bigcap_{\ell =1}^m B_{i_{\ell}}, \quad B' \subset \bigcap_{\ell = 1}^M B_{j_{\ell}}.\] Assume that $\mu(B)=\mu(B')$ and $B \cap B' = \emptyset$.  Denote by $t_1 < t_2 < \ldots < t_{\tau}$ the enumeration of $\{i_1, i_2, \ldots i_m\} \cup \{j_1, j_2, \ldots, j_M\}$ in increasing order.

Then using the same technique for estimating $\zeta_1^{N_k}$ from Lemma \ref{lemma - new lemma}, we find that
\begin{align*} \int_{B \cup B'} \zeta_1^{N_{j_M}}(x) d \mu &\geq \int_B \zeta_1^{N_{j_m}}(x) d \mu + \int_{B'}\zeta_1^{N_{j_M}}(x) d \mu\\
& \geq \delta' \epsilon_2 \epsilon_3 \mu(B) \left( \sum_{\ell =1}^{m} \omega(N_{i_{\ell}}) \frac{N_{i_{\ell}}}{\rho_{i_{\ell}}} + \sum_{\ell = 1}^M \omega(N_{j_{\ell}})\frac{N_{j_{\ell}}}{\rho_{j_{\ell}}}\right)\\
&\geq \delta' \epsilon_2 \epsilon_3 \mu(B) \sum_{\ell = 1}^{\tau} \omega(N_{t_{\ell}}) \frac{N_{t_{\ell}}}{\rho_{t_{\ell}}}.
\end{align*}
From Proposition \ref{proposition - the good B_k} we know that $\liminf \nu(B_k)=\beta>0$, so let $D \subset Y$ be an arbitrary set such that $\nu(D)>1-\beta/2$.  Then for sufficiently large $k$ we have
\[ \nu \left( D \cap B_k \right) > \frac{\beta}{2}.\]
Without loss of generality assume that the above inequality holds for all $k$.  Assume further that the various $B_i \cap D$ coincide; the above computation shows that our estimate for this case is no larger than the general case. 
\[ \int_D \zeta_0^{N_k}(y) d\nu \geq \delta' \epsilon_2 \epsilon_3 \frac{\beta}{2} \sum_{ \ell=1}^k \omega(N_{i_{\ell}}) \frac{N_{i_{\ell}}}{\rho_{i_{\ell}}}.\]

It follows from our assumptions, then, that for \emph{any} set $D \subset Y$ of measure $\nu(D) > 1- \beta/2$, we have 
\[\int_{D \times\{e\}} \zeta(y,g) d\mu = \int_D \zeta(y) d \nu =\infty,\]
which implies that $\zeta(y,g)=\infty$ on a set of positive $\mu$-measure.
\end{proof}
\end{theorem}

If $G$ is a discrete metric group, we will say that $G$ is \emph{at most $d$-dimensional} if 
\[ \#\left\{ g \in G : \|g \| \leq n\right\} \in O(n^d).\]

\begin{corollary}\label{corollary - lyapunov}
Assume that $\{Y, \nu, S\}$ is ergodic, with $f:Y \rightarrow G$, where $G$ is discrete and at most $d$-dimensional.  Denote
\begin{equation}\label{eqn - lyapunov} \lambda= \limsup_{n \rightarrow \infty} \frac{\log\left( \esssup_{y \in Y} \| \sum_{i=0}^{n-1}f(S^i y)\| \right)}{\log n},\end{equation} the principal Lyapunov exponent.  If $ 1/d > \lambda$ and the skew product is ergodic (recall \eqref{eqn - skew}), then for any $1/d>\lambda'>\lambda$, the skew product is $n^{(d \lambda'-1)}$-recurrent.
\begin{proof}
Between the dimension $d$ and the principal Lyapunov exponent $\lambda$, we may set $N_k = \gamma^k$ for some $\gamma>1$ and (for sufficiently large $k$) $\rho_k = \gamma^{kd\lambda'}$.  We apply Theorem \ref{theorem - main abstract theorem}; the verification of \eqref{eqn - strange equation at the heart of it all} is direct.
\end{proof}
\end{corollary}

\section{$\omega$-recurrence in skew products into $\mathbb{Z}$ over rotations}\label{section - rotation cocycles}

We now set $Y=\mathbb{R}/ \mathbb{Z}$, $\nu$ as Lebesgue measure, $S(y) = y + \alpha \mod 1$ for some $\alpha \in (0,1)\setminus \mathbb{Q}$, and $f:Y \rightarrow \mathbb{Z}$ to be of bounded variation.  We use \textit{standard continued fraction} notation, where:
\[\alpha = [a_1,a_2,\ldots] = \cfrac{1}{a_1+\cfrac{1}{a_2+\cfrac{1}{\ddots}}}.\]
The $a_i=a_i(\alpha)$ are called \emph{partial quotients} of $\alpha$.  Beginning with $q_0=1$ and $q_1=a_1$, recursively define $q_{n+1}=a_{n+1}q_n+q_{n-1}$, and let
\[\textbf{a}_n(\alpha)=\textbf{a}_n = \sum_{i=1}^n a_i.\]

The \textit{Denjoy-Koksma inequality} states 

\begin{equation}\label{eqn - denjoy} \forall n \in \mathbb{N}, \, \forall y \in [0,1), \quad \left| \sum_{i=0}^{q_n -1} f(y+ i \alpha) - q_n \int_0^1 f(t)dt \right| < \textrm{Var}(f).\end{equation}
The proof of the following Lemma is postponed until \S\ref{section - lemmas}:
\begin{lemma}\label{lemma - technical lemma}
For almost every $\alpha$, there is a $\delta > 0$ such that for all $k$
\[ \frac{q_{k}}{\textbf{a}_{k}} \geq \delta \sum_{i=1}^{k-1} \frac{q_i}{\textbf{a}_i} .\]
\end{lemma}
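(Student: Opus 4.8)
The plan is to analyze the growth of the sequence $q_k/\mathbf{a}_k$ and show that, for almost every $\alpha$, it grows geometrically fast enough that the sum of the preceding terms is dominated by a fixed fraction of the current term. Since $\mathbf{a}_k = \sum_{i=1}^k a_i$ and the $a_i$ are (for a.e.\ $\alpha$) typically bounded by a slowly growing function, while $q_k \geq q_{k-1}(a_k+1) \geq \cdots$ grows at least like $\phi^k$ (golden ratio) but in fact much faster on average, the ratio $q_k/\mathbf{a}_k$ inherits enough multiplicative growth. The key quantitative input I would invoke is the classical metric fact that for almost every $\alpha$, $\frac{1}{k}\log q_k \to \frac{\pi^2}{12\log 2}$ (the Lévy constant), or at least that $\log q_k \geq c k$ eventually for some fixed $c>0$; combined with $\mathbf{a}_k = o(q_k)$ and, more precisely, a subexponential bound on $\mathbf{a}_k$, this should force geometric growth of $q_k/\mathbf{a}_k$.

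First I would record that $q_k \geq q_{k-1} + q_{k-2} \geq \lambda q_{k-1}$ with $\lambda = (1+\sqrt5)/2$ unconditionally, and more usefully that $q_k = a_k q_{k-1} + q_{k-2}$. Next I would control $\mathbf{a}_k$: for almost every $\alpha$, $a_k = O(k^{1+\epsilon})$ for any fixed $\epsilon > 0$ (indeed $a_k \leq k\log^2 k$ eventually, by Borel--Cantelli against the measure $\Pr(a_k \geq t) \asymp 1/t$), hence $\mathbf{a}_k = O(k^{2+\epsilon})$, which is polynomial. Then set $b_k = q_k/\mathbf{a}_k$. From $\log q_k \sim Lk$ (Lévy) and $\log\mathbf{a}_k = O(\log k)$, we get $\log b_k = Lk - O(\log k)$, so that $b_k/b_{k-1} = \exp(L - O(\log k / k)) \to e^L > 1$. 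Therefore for large $k$, $b_k \geq \beta b_{k-1}$ with some fixed $\beta > 1$ (say $\beta = e^{L/2}$), and a geometric-series estimate gives $\sum_{i=1}^{k-1} b_i \leq b_{k-1} \cdot \frac{1}{1-\beta^{-1}}(1+o(1)) \leq \frac{C}{\beta - 1} b_k$. Choosing $\delta$ smaller than $(\beta-1)/C$ handles all large $k$; the finitely many small $k$ are absorbed by shrinking $\delta$ further (since each ratio $b_k/\sum_{i<k} b_i$ is a positive number).

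The main obstacle is making the passage from the \emph{asymptotic} Lévy estimate $\log q_k \sim Lk$ to a \emph{uniform-in-}$k$ multiplicative lower bound $b_k \geq \beta b_{k-1}$ valid for \emph{all} sufficiently large $k$ (not merely on average): a priori the single-step ratio $q_k/q_{k-1} = a_k + q_{k-2}/q_{k-1}$ can be as small as roughly $\lambda$ when $a_k=1$, which is $< e^L$, so consecutive steps can lose ground. The fix is to work with a block of $\ell$ consecutive steps: over any window $[k-\ell, k]$ one has $q_k/q_{k-\ell} \geq \lambda^\ell$ always, and by the ergodic theorem applied to $\log(q_{k}/q_{k-1})$ (or directly to the renewal-type structure of continued fractions), for a.e.\ $\alpha$ there is a fixed $\ell$ with $q_k/q_{k-\ell} \geq e^{L\ell/2} =: \beta^\ell$ for all large $k$; since $\mathbf{a}_k/\mathbf{a}_{k-\ell}$ stays bounded (polynomial growth) this yields $b_k \geq \beta^{\ell} b_{k-\ell}/O(1) \geq \beta_0 b_{k-\ell}$ for a fixed $\beta_0 > 1$, and then I would run the geometric-series argument along the $\ell$ residue classes mod $\ell$ separately before recombining. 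This block trick, rather than any deep new estimate, is where the real care is needed.
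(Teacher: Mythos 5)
Your plan has a genuine gap, and it sits exactly where you locate "the real care." The first issue you already flag yourself: Lévy's theorem gives $\log q_k = Lk + o(k)$ with $L=\pi^2/(12\log 2)\approx 1.19$, and an $o(k)$ error cannot be differenced, so nothing like $b_k/b_{k-1}\to e^{L}$ follows. The problem is that your proposed repair rests on a false statement: there is \emph{no} fixed $\ell$ such that $q_k/q_{k-\ell}\geq e^{L\ell/2}$ for all large $k$, for a.e.\ $\alpha$. By ergodicity of the Gauss map, for every fixed $\ell$ the block $a_{k-\ell+1}=\cdots=a_k=1$ occurs for infinitely many $k$ (indeed with positive frequency), and on such a block $q_k=\varphi_\ell q_{k-\ell}+\varphi_{\ell-1}q_{k-\ell-1}\leq \varphi_{\ell+1}q_{k-\ell}$, so the per-step growth rate in the window is only $\log\lambda\approx 0.481$ with $\lambda=(1+\sqrt 5)/2$, which is strictly less than $L/2\approx 0.593$; for small $\ell$ the bound fails as well (e.g.\ $a_k=1$ preceded by a huge $a_{k-1}$ makes $q_k/q_{k-1}$ arbitrarily close to $1$). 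The Birkhoff theorem controls averages over initial segments $[1,k]$, not all sliding windows of a fixed length; a uniform moving-window statement of the kind you invoke is precisely what fails here. The companion claim that $\textbf{a}_k/\textbf{a}_{k-\ell}$ "stays bounded (polynomial growth)" is also a non sequitur: a fixed-length window can contain a single partial quotient comparable to, or larger than, $\textbf{a}_{k-\ell}$ (Borel--Bernstein gives $a_k\geq k\log k$ infinitely often a.e., while $\textbf{a}_{k-1}$ is infinitely often of that same order), so polynomial growth of $\textbf{a}_k$ in $k$ does not bound the ratio across a window.

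What saves the lemma is that the two ratios must be traded off against each other rather than bounded separately: when $\textbf{a}$ jumps inside a window it is because some $a_j$ is huge, and then $q_k/q_{k-\ell}\geq a_j$ is correspondingly huge. This trade-off is the actual content of the paper's proof, which runs a recursion for $C_k=\frac{\textbf{a}_k}{q_k}\sum_{i<k}q_i/\textbf{a}_i$, shows that any step with $a_{k+1}\geq 2$ contracts $C_k$, handles maximal runs of $1$'s via the Fibonacci growth $q_{n+i}=\varphi_i q_n+\varphi_{i-1}q_{n-1}$, and uses metric input only at the very end: since the a.e.\ frequency of partial quotients equal to $1$ is $(\log 4-\log 3)/\log 2<1/2$, a run of $1$'s starting at index $n$ has length $m<n\leq \textbf{a}_n$, so the factor $1+m/\textbf{a}_n$ stays bounded. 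If you want to keep your block structure, it can be repaired, but not with the Lévy/ergodic-theorem threshold: use the unconditional bounds $q_k/q_{k-\ell}\geq\varphi_\ell$ and $q_k/q_{k-\ell}\geq\max_j a_j\geq A/\ell$ (where $A$ is the sum of the partial quotients in the window) together with $\textbf{a}_{k-\ell}\geq k-\ell$, splitting according to whether $A\leq\textbf{a}_{k-\ell}$ or not; that yields $b_k\geq \beta_0\, b_{k-\ell}$ for large $k$ without any appeal to Lévy's constant. As written, however, the key inequality your argument needs is false, so the proposal does not prove the lemma.
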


\begin{theorem}\label{theorem - 1 over n}Let $f$ be a $\mathbb{Z}$-valued function of bounded variation and zero mean such that for almost every $\alpha$ the corresponding skew product over rotation by $\alpha$ is conservative and ergodic. Then for almost every $\alpha$ the corresponding skew product over rotation by $\alpha$ is $(1/n)$-recurrent.
\begin{proof}
Set $N_k=q_k$, and via \eqref{eqn - denjoy} we let $\rho_k = 2\textrm{Var}(f)\textbf{a}_k$, and $\omega(n)=1/n$.  Apply Theorem \ref{theorem - main abstract theorem}, justified by Lemma \ref{lemma - technical lemma}, disregarding the constant $2\textrm{Var}(f)$, and the known fact (see e.g. the final Corollary in \cite{MR1556899}) that for almost-every $\alpha$ we have
\[\sum_{i=1}^{\infty} \frac{1}{\textbf{a}_i} = \infty. \qedhere\]
\end{proof}
\end{theorem}
\begin{remark}
It follows that for almost-every $\alpha$ and any bounded-variation function $f:\mathbb{R}/\mathbb{Z} \rightarrow \mathbb{Z}$, the principal Lyapunov exponent (recall \eqref{eqn - lyapunov}) is zero; this result is already known and follows via \eqref{eqn - denjoy}: see for example \cite[\S2]{conze}.
\end{remark}
\begin{remark} The sequence $\textbf{a}_i(\alpha)/(i \log i)$ converges to $1/\log 2$ in measure (\cite[\S 4]{MR1556899}). So for generic $\alpha$ it is possible to strengthen this recurrence to any $\omega$ of the form (for sufficiently large $n$)
\[\omega(n) = \frac{1}{n \cdot \log^{(3)} n \cdot \log^{(4)} n \cdots \log^{(j)} n},\]
where $\log^{(i)}n$ is the $i$-th iterated logarithm and $j$ is arbitrary; approximating $\textbf{a}_i > i \log i$ for all but a zero-density sequence of $i$ and $q_i > (K-\epsilon)^i$ (where $K$ is the \textit{Khintchine-Levy constant}) for all sufficiently large $i$ and $\epsilon>0$ arbitrary, we see that for some $C > 0$
\[ \omega(q_k)\frac{q_k}{\textbf{a}_k} \geq \frac{C}{k \cdot \log k \cdot \log^{(2)} k \cdots \log^{(j-1)} k}\] along a sequence of $k$ whose complement (in $\mathbb{N}$) is of zero density.
\end{remark}

A natural generalization of a rotation on $\mathbb{R}/\mathbb{Z}$ is an \emph{interval exchange transformation}, or \emph{IET}, defined on finitely many intervals.  We are not particularly concerned with the definition or properties of IETs; an excellent survey on the subject is \cite{MR2219821}.  For $T$ an IET of \emph{periodic type} (again, we are not concerned with the specific definition here), we have for all $n \in \mathbb{N}$ \cite[Theorem 2.2]{conze-fraczek}:
\begin{equation}\label{eqn - periodic type bound} \sup_{x \in [0,1]} \left| \sum_{i=0}^{n-1} f \circ T^i (x) \right| \leq C \cdot \left(\log n\right)^{M+1} \cdot n^{\theta_2/\theta_1} \cdot V(f),\end{equation}
where $f: [0,1) \rightarrow \mathbb{R}$, $V(f)$ is the variation of $f$, $0 \leq \theta_2 < \theta_1$ are the two largest Lyapunov exponents, $M$ is an explicit positive integer not larger than the number of intervals exchanged and $C$ is a constant not dependent on $n$.

\begin{lemma}\label{lemma - technical lemma for IET}
Let $T$ is an interval exchange of periodic type, ergodic with respect to Lebesgue measure, $f: S^1 \rightarrow \mathbb{Z}$ be of bounded variation, and define for some fixed $\gamma>1$
\[N_k = \gamma^k, \quad \rho_k = C k^{M+1} \gamma^{k \frac{\theta_2}{\theta_1}},\]
where the constant $C$ is $2 \textrm{Var} (f)$ times the constant given by \eqref{eqn - periodic type bound} and does not depend on $k$.
Then there is some $\delta>0$ such that \eqref{eqn - strange equation at the heart of it all} holds:
\[ \frac{N_k}{\rho_k} \geq \delta \sum_{j=1}^{k-1} \frac{N_j}{\rho_j}.\]
\begin{proof}
We will show that $C_k$ are bounded, where
\[C_k=\frac{\rho_k}{\gamma^k} \sum_{i=1}^{k-1} \frac{\gamma^i}{\rho_i}.\]
Denote $\epsilon = \gamma^{1-\theta_2 / \theta_1}>1 $, so that 
\begin{align*}
C_k &= \frac{k^{M+1}}{\epsilon^k} \sum_{i=1}^{ [k/2] } \frac{\epsilon^i}{i^{M+1}}+ \frac{k^{M+1}}{\epsilon^k}\sum_{i=[k/2]+1}^{k-1} \frac{\epsilon^i}{i^{M+1}}\\
&\leq \frac{k^{M+1}}{\epsilon^k} \sum_{i=1}^{[k/2]} \epsilon^i + \frac{k^{M+1}}{\epsilon^k \cdot [k/2]^{M+1}} \sum_{i=[k/2]+1}^{k-1} \epsilon^i\\
\end{align*}
If we set $n \geq 1 - \log(\epsilon -1)$, we may bound
\[ \sum_{i=1}^{k} \epsilon^i \leq \epsilon^{k+n},\] so we have (recall $M$ and $n$ are constants)
\begin{align*}
C_k &\leq \frac{k^{M+1} \epsilon ^{k/2+n}}{\epsilon^k} + \frac{2^{M+1} \epsilon^{k+n-1}}{\epsilon^k}\\
& \leq \frac{k^{M+1} \epsilon^{n+1}}{\epsilon^{k/2}} + 2^{M+1} \epsilon^{n-1}\\
\limsup_{k \rightarrow \infty} C_k &\leq 2^{M+1} \epsilon^n. \qedhere
\end{align*}
\end{proof}
\end{lemma}

Then we may obtain a result stronger than that implied solely by Corollary \ref{corollary - lyapunov}:
\begin{theorem}\label{theorem - for IETs}
Let $T$ be is an ergodic IET of periodic type defined on $M$ intervals, with $f:S^1 \rightarrow \mathbb{Z}$ of bounded variation, and assume further that the skew product given by $f$ is ergodic. Then the skew product is $(\log n)^M n^{(\theta_2/\theta_1 -1)}$-recurrent.
\begin{proof}
The proof is direct in light of Theorem \ref{theorem - main abstract theorem}, Lemma \ref{lemma - technical lemma for IET}, and divergence of $\sum 1/(n \log n)$. 
\end{proof}
\end{theorem}

\begin{remark}
It is no more difficult to expand the result of Theorem \ref{theorem - for IETs} to $\omega(n)$ of the form
\[\omega(n) = \frac{\left(\log n\right)^M}{n^{\frac{\theta_1-\theta_2}{\theta_1}} \cdot \log^{(2)}n \cdot \log^{(3)} n \cdots \log^{(j)}n},\]
where $\log ^{(i)}(n)$ represents the $i$-th iterated logarithm, and $\omega(n)$ is defined for sufficiently large $n$.
\end{remark}

\section{Non $\omega$-recurrence in the infinite staircase}\label{section - staircase}

We now turn our attention to the problem of \textit{specific} $\alpha$, and the rates of $\omega$-recurrence in the associated infinite staircase (recall \eqref{eqn - staircase function}).  We will restrict the form of $\alpha$, the rotation on $\mathbb{R}/\mathbb{Z}$:
\begin{equation}\label{eqn - alpha heavy form} \alpha = [2r_1,s_1,2r_2,s_2,\ldots] \quad (r_i, s_i \in \mathbb{Z}^+).\end{equation}
A \textit{substitution} is a homomorphism from the free monoid on a finite set (the \textit{alphabet} of the substitution) to itself, and may be extended to also act on infinite sequences on this set.  Elements of the free monoid are referred to as \textit{words}, and infinite sequences are frequently referred to as \textit{infinite words}.  The \textit{concatenation} of two (finite) words $\omega_1$, $\omega_2$ is their product $\omega_1 \omega_2$, and if $\omega=\omega_1 \omega_2$, then $\omega_1$ is called a \textit{left factor} of $\omega$, and $\omega_2$ is called a \textit{right factor}.  If $\omega'$ is a factor of $\omega$ and $\omega' \neq \omega$, then $\omega'$ is called a \textit{proper factor}.  The space of infinite words is compact in the product topology (the alphabet is finite), and if finite words are equated with open cylinder sets in this space, one may refer in the natural manner to the limit of a sequence of finite words.  In any dynamical system $\{X, \mu, T\}$, we may partition $X$ into a finite number of sets indexed by an alphabet, and the orbit of any point $x \in X$ corresponds to a sequence $\{\omega_0, \omega_1, \ldots\}$ in this alphabet, with $\omega_n$ being given by the symbol corresponding to the partition element containing $T^n(x)$.  This sequence is called the \textit{symbolic encoding} of the orbit of $x$ with respect to the given partition.

Fix the alphabet $\{A,B,C\}$ and define the substitutions $\sigma_i$ to be the homomorphisms determined by
\[ \sigma_i: \left\{ \begin{array}{l}
A \rightarrow A \left( A^{r_i} B^{r_i-1} C \right)\left( A^{r_i} B^{r_i-1} C \right)^{s_i-1} \\
B \rightarrow A \left( A^{r_i -1} B^{r_i} C \right)\left( A^{r_i} B^{r_i-1} C \right)^{s_i-1} \\
C \rightarrow A \left( A^{r_i -1} B^{r_i} C \right)\left( A^{r_i} B^{r_i-1} C \right)^{s_i}
\end{array} \right. \]
For convenience denote $\sigma^{(n)}=\sigma_1 \circ \sigma_2 \circ \cdots \circ \sigma_n$.

\begin{proposition}\label{proposition - all the substitution results}
The symbolic coding of the orbit of $0$ under rotation by $\alpha$, whose continued fraction expansion is of the form \eqref{eqn - alpha heavy form}, with respect to the partition
\[A = \left[0,1/2 \right), \quad B = \left[ 1/2, 1-\alpha \right), \quad C=\left[1-\alpha,1\right), \]
is given by the infinite word
\[W = \lim_{n \rightarrow \infty} \sigma^{(n)}(A).\]
Furthermore, the encoding of \textit{any} point $y \in [0,1)$ may be presented as beginning with the concatenated word $W_1(y) W_2(y)$, where $W_2(y) \in \{\sigma^{(n)}(A),\sigma^{(n)}(B),\sigma^{(n)}(C)\}$, and $W_1(y)$ is a proper right factor (possibly empty) of one of these words.  Finally, the word $\sigma^{(n)}(A)$ is of length $q_{2n}$, and both $\sigma^{(n)}(B)$ and $\sigma^{(n)}(C)$ are of length $q_{2n}+q_{2n-1}$.
\begin{proof}
That the coding of the origin takes the form given is \cite[Thm. 1.1, Prop. 4.3]{substitutions1}.  That the orbit of any $y$ may be realized through the concatenation given follows from \cite[Prop. 4.1]{substitutions1}.  Finally, the lengths of all the words are computed in \cite[Lem. 5.4]{substitutions1}; we let
\[ M_n = \left[ \begin{array}{c c} (2r_n-1)s_n+1 & s_n \\ (2r_n-1)s_n + r_n & s_n +1 \end{array}\right],\]
and then we have
\[ M_n \cdot M_{n-1} \cdots M_1 \left[ \begin{array}{c} 1 \\ 1\end{array}\right] = \left[ \begin{array}{c} \left| \sigma^{(n)}(A) \right|\\ \left|\sigma^{(n)}(B)\right| = \left| \sigma^{(n)}(C) \right| \end{array} \right].\]

First note that $\sigma_1(A)$ is of length $q_2=2r_1s_1+1$, and similarly $\sigma_1(B)$ and $\sigma_1(C)$ are of length $q_2+q_1=2r_1(s_1+1)+1$.  Assume, then, that $\sigma^{(n-1)}(A)$ is of length $q_{2n-2}$ and $\sigma^{(n-1)}(B)$ and $\sigma^{(n-1)}(C)$ are of length $q_{2n-2}+q_{2n-3}$.  Then using the matrix product formula above, we inductively find
\begin{align*} |\sigma^{(n)}(A)| &= ((2r_n-1)s_n+1)q_{2n-2} + s_n \left(q_{2n-2}+q_{2n-3}\right)\\
&=2r_ns_nq_{2n-2}-s_nq_{2n-2}+s_nq_{2n-2}+s_nq_{2n-3}+q_{2n-2}\\
&=s_n(2r_nq_{2n-2}+q_{2n-3})+q_{2n-2}\\
&=s_n(q_{2n-1})+q_{2n-2}=q_{2n}.
\end{align*}
Similarly $|\sigma^{(n)}(B)|=|\sigma^{(n)}(C)|=q_{2n}+q_{2n-1}$.
\end{proof}
\end{proposition}

Given a word $P=p_1p_2\ldots p_n$ of length $n$ in this alphabet, we define
\[g(P) = \#\{i \leq n: p_i=A\} - \#\{i\leq n: p_i = B\} - \#\{i \leq n: p_i=C\}.\]
If $P$ is the coding of the length $(n-1)$ orbit of some point $y$, then $g(P)$ is the $n$-th ergodic sum of $y$.  Further denote for $k \in \mathbb{Z}$
\[g(P,k) = \#\{i \leq n : g(p_1 p_2 \ldots p_i)=k\}.\]

\begin{lemma}\label{lemma - bound on hits to levels for orbit of zero}
Suppose $\alpha$ is of the form given by \eqref{eqn - alpha heavy form} and the $s_i$ are bounded.  Then there is a constant $\tau$ such that for all $n$ we have
\[ \max\left\{g(P,k): k \in \mathbb{Z}, P \in \{\sigma^{(n)}(A),\sigma^{(n)}(B),\sigma^{(n)}(C)\} \right\} \leq \tau q_{2n-2}.\]
\begin{proof}
As the substitutions $\sigma_i$ are homomorphisms, it is direct to show inductively that for our substitutions $\sigma_i$, we have for all $n$
\[g(\sigma^{(n)}(A))=1, \quad g(\sigma^{(n)}(B))=g(\sigma^{(n)}(C))=-1;\] see also \cite[Prop. 5.1]{substitutions1}.  Consider, then, the example of
\begin{align*}
\sigma^{(n)}(A) & = \sigma^{(n-1)}(\sigma_n A)\\
&= \sigma^{(n-1)}(A (A^{r_n}B^{r_n-1}C)^{s_n})\\
&=\sigma^{(n-1)}(A) \left[ \left(\sigma^{(n-1)}(A)\right)^{r_n} \left(\sigma^{(n-1)}(B)\right)^{r_n-1}\left(\sigma^{(n-1)}(C)\right)\right]^{s_n}.
\end{align*}
Using that our sums are an additive cocycle,
\begin{equation}\label{eqn - split}\begin{split}g(\sigma^{(n)}(A),k) = g(\sigma^{(n-1)}(A),k) &+ s_n \left( \sum_{j=1}^{r_n}g(\sigma^{(n-1)}(A),k-j) +\right.\\
& \left. \sum_{j=2}^{r_n+1}g(\sigma^{(n-1)}(B),k-j) + g(\sigma^{(n-1)}(C),k-1)\right).\end{split}\end{equation}
Regardless of how large we choose to make $r_n$, we must have
\[\sum_{j=1}^{r_n}g(\sigma^{(n-1)}(A),k-j) \leq q_{2n-2},\] as $q_{2n-2}$ is the length of the word $\sigma^{(n-1)}(A)$ (Prop. \ref{proposition - all the substitution results}) and each term in the word is accounted for at most once in the sum.  Similarly we have
\[\sum_{j=2}^{r_n+1}g(\sigma^{(n-1)}(B),k-j) \leq q_{2n-2}+q_{2n-3},\quad g(\sigma^{(n-1)}(C),k-1) \leq q_{2n-2}+q_{2n-3}. \]
As the $s_i$ are bounded, we may find $\tau$, independent of $n$, so that via \eqref{eqn - split} $g(\sigma^{(n)}(A),k)  \leq \tau q_{2n-2}$.  Similar arguments apply to $\sigma^{(n)}(B)$ and $\sigma^{(n)}(C)$.
\end{proof}
\end{lemma}

\begin{corollary}\label{corollary - bound on hits to any level any point}
For any $y \in [0,1)$, if we denote $R_n(y)$ to be the symbolic encoding of the $q_{2n}$-length orbit of $y$, then there is a constant $\tau$ (independent of both $y$ and $n$) such that
\[\max\{g(R_n(y),k) : k \in \mathbb{Z}\} \leq \tau q_{2n-2}.\]
\begin{proof}
From Proposition \ref{proposition - all the substitution results}, $R_n(y)$ is a left factor of $W_1(y)W_2(y)$, where $W_2(y) \in \{\sigma^{(n)}(A), \sigma^{(n)}(B), \sigma^{(n)}(C)\}$ is of length at least $q_{2n}$ (and $W_1(y)$ is a proper right factor of one of these words).  Considering the words $W_1(y)$ and $W_2(y)$ independently, we need only double the constant $\tau$ from Lemma \ref{lemma - bound on hits to levels for orbit of zero}.
\end{proof}
\end{corollary}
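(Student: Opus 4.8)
The plan is to reduce the statement about an arbitrary point $x$ to Lemma \ref{lemma - bound on hits to levels for orbit of zero} by combining the decomposition of arbitrary orbit codings from Proposition \ref{proposition - all the substitution results} with the simple observation that the quantities $\max_{k}g(\cdot,k)$ cannot increase when one restricts a word to a factor or breaks it into a concatenation. The only non-mechanical ingredient is the following cocycle bookkeeping, which I would state first: if $P=p_1\cdots p_m$ is a word and $0\le i\le j\le m$, write $P[i,j]=p_{i+1}\cdots p_j$ and $c=g(p_1\cdots p_i)$; since the sums are an additive cocycle, $g(p_{i+1}\cdots p_\ell)=g(p_1\cdots p_\ell)-c$ for $i<\ell\le j$, so $g(P[i,j],k)=\#\{\ell: i<\ell\le j,\ g(p_1\cdots p_\ell)=k+c\}\le g(P,k+c)$, and hence $\max_k g(P[i,j],k)\le \max_k g(P,k)$. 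Likewise, for a concatenation $R=UV$ one has $g(R,k)=g(U,k)+g(V,k-g(U))$, so $\max_k g(R,k)\le \max_k g(U,k)+\max_k g(V,k)$.

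With this in hand I would invoke Proposition \ref{proposition - all the substitution results}: the coding of $x$ begins with $W_1(x)W_2(x)$, where $W_2(x)\in\{\sigma^{(n)}(A),\sigma^{(n)}(B),\sigma^{(n)}(C)\}$ and $W_1(x)$ is a (possibly empty) proper right factor of one of these three words. Because $|W_2(x)|\ge q_{2n}$, the length-$q_{2n}$ coding $R_n(x)$ is a prefix of $W_1(x)W_2(x)$, and there are two cases. Either $R_n(x)$ is already a prefix of $W_1(x)$, in which case $R_n(x)$ is a factor of one of $\sigma^{(n)}(A),\sigma^{(n)}(B),\sigma^{(n)}(C)$ and the factor inequality together with Lemma \ref{lemma - bound on hits to levels for orbit of zero} gives $\max_k g(R_n(x),k)\le \tau q_{2n-2}$ at once; or $R_n(x)=W_1(x)\,V$ with $V$ a prefix of $W_2(x)$. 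In the latter case $V$ is a factor of one of the three words and $W_1(x)$ is a factor of one of the three words, so applying the lemma to each and using the concatenation inequality yields $\max_k g(R_n(x),k)\le \max_k g(W_1(x),k)+\max_k g(V,k)\le 2\tau q_{2n-2}$. Renaming $2\tau$ as $\tau$ completes the argument.

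I do not expect a genuine obstacle here: the entire proof is bookkeeping. The one place that warrants care is the first paragraph — being explicit that cutting a word at a position only translates all the intermediate ergodic sums by the integer equal to the partial sum at that position, so that $\sup_k g(\cdot,k)$ is unaffected by passing to factors and is subadditive under concatenation — and making sure the decomposition of Proposition \ref{proposition - all the substitution results} is used at the right scale, i.e. that $W_2(x)$ alone already contributes a full block of length $q_{2n}$ so that $R_n(x)$ is genuinely covered by at most two factors of $\sigma^{(n)}$-words.
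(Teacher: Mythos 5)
Your proposal is correct and follows essentially the same route as the paper: split the length-$q_{2n}$ coding across the two words $W_1(x)$ and $W_2(x)$ from Proposition \ref{proposition - all the substitution results}, bound each piece by Lemma \ref{lemma - bound on hits to levels for orbit of zero}, and double the constant. The paper simply leaves implicit the bookkeeping you spell out (that passing to a factor only translates the intermediate sums, so $\max_k g(\cdot,k)$ is monotone under factors and subadditive under concatenation).
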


Note that if the $s_i$ are bounded, then for some $\tau'$ (independent of $n$) we have
\begin{equation}\label{eqn - silly bound on q_n}2r_{n+1}q_{2n}<q_{2n+2}< \tau' r_{n+1}q_{2n}.\end{equation}
\begin{theorem}\label{theorem - not omega recurrent for power more than half}
Let $\omega(n) \in o(1/n^{\epsilon})$ for some $\epsilon>1/2$ and be monotone decreasing and regularly varying, and let $f$ be given by \eqref{eqn - staircase function}.  Then there is an uncountable set of $\alpha$ such that the infinite staircase is \textit{not} $\omega$-recurrent.  In fact, for $A=Y \times\{0\}$ any fixed $\delta>0$, there is an uncountable set of $\alpha$ for which 
\[\zeta(y) \leq \omega(0) + \delta.\]
for \emph{all} $y \in Y$.
\begin{proof}

Let $A = Y \times\{0\}$ and $\delta>0$.  We have via the cocycle identity \eqref{eqn - cocycle}
\[\zeta_0^{q_{2n+2}}(y) \leq \zeta_0^{q_{2n}}(y) + \sum_{l=1}^{2r_{n+1}s_{n+1}+1} \left( \zeta_0^{q_{2n}} (y+(l -1)q_{2n}\alpha \right).\]
By Corollary \ref{corollary - bound on hits to any level any point}, monotonicity of $\omega$, and \eqref{eqn - silly bound on q_n} we have
\begin{equation}\label{eqn - phibound}\zeta_0^{q_{2n+2}}(y) \leq \zeta_0^{q_{2n}}(y) + \sum_{l=1}^{\tau' r_n} \tau q_{2n-2}\omega(l q_{2n}).\end{equation}
By the assumption that $\omega(n) \in o(1/n^{\epsilon})$ and \eqref{eqn - silly bound on q_n}, we have that
\begin{align*}
\sum_{l=1}^{\tau' r_n} \tau q_{2n-2}\omega(l q_{2n}) &\leq \sum_{l=1}^{\tau' r_n} \frac{\tau q_{2n-2}}{l^{\epsilon}(2r_n q_{2n-2})^\epsilon}\\
&< \frac{\tau q_{2n-2}^{1-\epsilon}}{(2r_n)^{\epsilon}} \sum_{l=1}^{\tau' r_n} \frac{1}{l^{\epsilon}}\\
&< \frac{\tau q_{2n-2}^{1-\epsilon}}{(2r_n)^{\epsilon}} \frac{(\tau' r_n)^{1-\epsilon}-\epsilon}{1-\epsilon},
\end{align*}
where the final line follows from elementary calculus (the so-called ``integral comparison" for sums).  As $q_{2n-2}$ does not depend on $r_n$, and $\epsilon>1/2$, with $r_1,s_1,\ldots,r_{n-1},s_{n-1}$ fixed, we are free to set $r_n$ large enough so that
\[\frac{\tau q_{2n-2}^{1-\epsilon} \left((\tau' r_n)^{1-\epsilon}-\epsilon\right)}{(2r_n)^{\epsilon}(1-\epsilon)}< \frac{\delta}{2^n},\]
so letting $n \rightarrow \infty$, via \eqref{eqn - phibound} we have $\zeta(x) \leq \omega(0) + \delta$.  As the choice of $r_n$ is not specific (just some lower bound depending on prior partial quotients), the set of such $\alpha$ that we may construct in this manner is uncountable.
\end{proof}
\end{theorem}

\section{Proof of Lemma \ref{lemma - technical lemma}}\label{section - lemmas}
\begin{proof}
Denote for $k \in \mathbb{Z}^+$
\[C_k = \frac{\textbf{a}_k}{q_k} \sum_{i=1}^{k-1} \frac{q_i}{\textbf{a}_i};\]
we will show that the $C_k$ are bounded for generic $\alpha$.  Then
\begin{align*}
C_{k+1} &= \frac{\textbf{a}_{k+1}}{q_{k+1}} \sum_{i=1}^{k} \frac{q_i}{\textbf{a}_i}\\
&=\frac{\textbf{a}_{k}+ a_{k+1}}{a_{k+1}q_{k}+q_{k-1}} \left(\frac{q_k}{\textbf{a}_k} + \sum_{i=1}^{k-1} \frac{q_i}{\textbf{a}_i}\right)\\
&< \frac{\textbf{a}_{k}+ a_{k+1}}{a_{k+1}q_{k}} \left(\frac{q_k}{\textbf{a}_k} + \sum_{i=1}^{k-1} \frac{q_i}{\textbf{a}_i}\right) \\
&=\left(\frac{1}{a_{k+1}}+\frac{1}{\textbf{a}_k}\right) + \frac{C_k}{a_{k+1}}+\frac{C_k}{\textbf{a}_k}\\
&=\left(\frac{1}{\textbf{a}_k}+\frac{1}{a_{k+1}}\right)(C_k+1)
\end{align*}
For \textit{any} $\alpha$ we have $\textbf{a}_k \geq k$, so for sufficiently large $k$, 
\begin{equation}\label{eqn - contfrac estimate for large quotients}C_k \geq 4, \, a_{k+1} \geq 2 \quad \Longrightarrow \quad C_{k+1} < \frac{2}{3}C_k.\end{equation}
If $\alpha$ only has finitely many $a_i=1$, then clearly from \eqref{eqn - contfrac estimate for large quotients} the $C_k$ remain bounded.  So let $a_{k+i}=1$ for $i=1,2,\ldots,m$.  In this case we have $\textbf{a}_{k+i}=\textbf{a}_k +i$ and $q_{k+i}=\varphi_i q_k + \varphi_{i-1}q_{k-1}$, where $\varphi_i$ is the $i$-th Fibonacci number (beginning with $\varphi_0=\varphi_1=1$).  Then for $m,k>0$
\[C_{k+m} = \frac{\textbf{a}_{k+m}}{q_{k+m}} \sum_{i=1}^{k+m-1}\frac{q_i}{\textbf{a}_i} < \frac{\textbf{a}_k+m}{\varphi_m q_k} \left( \sum_{i=1}^{k-1} \frac{q_i}{\textbf{a}_i} + \sum_{i=0}^{m-1} \frac{\varphi_i q_k + \varphi_{i-1} q_{k-1}}{\textbf{a}_k +i}\right).\]
As $q_{k-1}<q_k$ and $\varphi_{k-1}+\varphi_k=\varphi_{k+1}$, we have
\begin{equation}\label{eqn - intermediary contfrac}C_{k+m} < \frac{\textbf{a}_k + m}{\textbf{a}_k \varphi_m} C_k + \frac{\textbf{a}_k+m}{\textbf{a}_k}\sum_{i=0}^{m-1} \frac{\varphi_{i+1}}{\varphi_m}.\end{equation}

On the one hand, if $m=1$ we have
\[C_{k+1} \leq \frac{k+1}{k}C_k + 2\frac{k+1}{k},\]
which if $a_{k+2} \geq 2$, $C_k \geq 10$, and $k$ is sufficiently large ($k > 44$ suffices), then using \eqref{eqn - contfrac estimate for large quotients}, $C_{k+2} < C_k$.

On the other hand, it is direct to verify for $m \geq 2$ both
\[ \frac{\textbf{a}_k+m}{\textbf{a}_k \varphi_m} \leq 1, \quad \sum_{i=0}^{m-1}  \varphi_i = \varphi_{m+1}-1, \]
so that \eqref{eqn - intermediary contfrac} may be replaced with
\[C_{k+m} < C_k + \left(1+\frac{m}{k} \right) \frac{\varphi_{m+2}}{\varphi_m} < C_k + 3 \left( 1 + \frac{m}{k} \right).\]

Our interest then turns to establishing a reasonable bound on $m/k$ for generic $\alpha$.  We discard the null set of $\alpha$ for which only finitely many $a_i \neq 1$, so for generic $\alpha$ we may define an infinite sequence of $n_i$, $m_i$ such that
\[a_k = 1 \quad \Longleftrightarrow \quad k=n_i+j, \, j\in \{1,2,\ldots,m_i\}.\]
It is an elementary fact in the theory of continued fractions that for generic $\alpha$,
\[ \lim_{k \rightarrow \infty} \frac{\# \{a_j = 1 : j=1,2,\ldots k \} }{k} = \frac{\log 4 - \log 3}{\log 2} < \frac{1}{2}.\]
On the other hand, if there were infinitely many $m_i > n_i$, we would have
\[ \limsup_{k \rightarrow \infty} \frac{\# \{a_j = 1 : j=1,2,\ldots k \}}{k} \geq  \frac{1}{2}.\]
So for generic $\alpha$, we eventually have $m_i \leq n_i$, so for sufficiently large $i$ we have
\begin{equation}\label{eqn - almost done} 0 \leq j \leq m_i \quad \Longrightarrow \quad C_{n_i+j} < C_{n_i}+6.\end{equation}
We then have $a_{n_i+m_i+1} \geq 2$, so using \eqref{eqn - contfrac estimate for large quotients} 

\begin{equation}\label{eqn - last one}C_{n_i} \geq 12 \quad \Longrightarrow \quad C_{n_i+m_i+1} < \frac{2}{3} \left(C_{n_i}+6 \right) \leq C_{n_i}.\end{equation}
Therefore, combining \eqref{eqn - contfrac estimate for large quotients},  \eqref{eqn - almost done}, and \eqref{eqn - last one}, for generic $\alpha$ the sequence $C_k$ remains bounded.
\end{proof}

\bibliographystyle{amsalpha}
\bibliography{omega-recurrence-bifile}

\providecommand{\bysame}{\leavevmode\hbox to3em{\hrulefill}\thinspace}
\providecommand{\MR}{\relax\ifhmode\unskip\space\fi MR }
\providecommand{\MRhref}[2]{%
  \href{http://www.ams.org/mathscinet-getitem?mr=#1}{#2}
}
\providecommand{\href}[2]{#2}
\begin{thebibliography}{Con09}

\bibitem[CF11]{conze-fraczek}
Jean-Pierre Conze and Krzysztof Fr{\c{a}}czek, \emph{Cocycles over interval
  exchange transformations and multivalued {H}amiltonian flows}, Adv. Math.
  \textbf{226} (2011), no.~5, 4373--4428. \MR{2770454}

\bibitem[Con09]{conze}
Jean-Pierre Conze, \emph{Recurrence, ergodicity and invariant measures for
  cocycles over a rotation}, Contemporary Mathematics \textbf{485} (2009),
  45--70.

\bibitem[Khi35]{MR1556899}
A.~Khintchine, \emph{Metrische {K}ettenbruchprobleme}, Compositio Math.
  \textbf{1} (1935), 361--382. \MR{1556899}

\bibitem[Kre67]{krengel}
U.~Krengel, \emph{Classification of states for operators}, Proc. Fifth Berkeley
  Symp. on Math. Statist. and Prob. \textbf{2} (1967), 415--429.

\bibitem[Ral12]{substitutions1}
David Ralston, \emph{Substitutions and $1/2$-discrepancy sums of $\{n \theta
  +x\}$}, Acta Arithmetica \textbf{154} (2012), no.~1, 1--28.

\bibitem[Via06]{MR2219821}
Marcelo Viana, \emph{Ergodic theory of interval exchange maps}, Rev. Mat.
  Complut. \textbf{19} (2006), no.~1, 7--100. \MR{2219821 (2007f:37002)}

\end{thebibliography}
\end{document}